\def\DD{\mathcal D}
\def\HH{\mathcal H}
\def\PP{\mathcal P}
\def\NN{\mathbb N}
\def\RR{\mathbb R}
\def\xx{\mathbf{x}}
\def\yy{\mathbf{y}}
\def\zz{\mathbf{z}}
\newcommand{\la}{\lambda}
\newcommand{\al}{\alpha}
\newcommand{\ella}{\ell_A}
\newcommand{\elll}{\ell_L}
\newcommand{\fz}{f_{\mathbf{z},\la}}
\newcommand{\fp}{\hat f}
\newcommand{\fr}{\fp^R}
\newcommand{\fbar}{\bar f}
\newcommand{\gp}{\hat g}
\newcommand{\LL}{\mathscr{L}^2(X,\nu;Y)}
\newcommand{\dl}{\mathcal{D}(L)}
\newcommand{\da}{\mathcal{D}(A)}
\newcommand{\dr}{d(R)}
\newcommand{\dra}{d_A(R)}
\newcommand{\drp}{d^{p}(R)}
\newcommand{\sx}{S_\xx}
\newcommand{\tx}{T_\xx}
\newcommand{\ip}{I_\nu}
\newcommand{\tp}{T_\nu}
\newcommand{\argmin}{\operatornamewithlimits{argmin}}
\newcommand{\paren}[1]{\left(#1\right)}
\newcommand{\brac}[1]{\left\{#1\right\}}
\newcommand{\sbrac}[1]{\left[#1\right]}
\newcommand{\inner}[1]{\left\langle#1\right\rangle}
\newcommand{\norm}[1]{\left\|#1\right\|}
\newcommand{\scalar}[3]{\langle{ #1},{#2} \rangle_{#3}}
\newcommand{\tr}{\operatorname{tr}}
\newtheorem{theorem}{Theorem}[section]
\newtheorem{corollary}[theorem]{Corollary}
\newtheorem{definition}[theorem]{Definition}
\newtheorem{proposition}[theorem]{Proposition}
\newtheorem{assumption}{Assumption}
\begin{document}
\title{Statistical Inverse Problems in Hilbert Scales}

\author{Abhishake}
\address{Institute of Mathematics, Technical University of Berlin, Stra{\ss}e des 17. Juni 136, 10623 Berlin, Germany}
\email{abhishake@tu-berlin.de}
%\date{Version: \today}

\keywords{Statistical inverse problem; Tikhonov regularization; Hilbert Scales; Reproducing kernel Hilbert space; Minimax convergence rates.}
\subjclass[2010]{Primary: 62G20; Secondary: 62G08, 65J15, 65J20, 65J22.}

\begin{abstract}
In this paper, we study the Tikhonov regularization scheme in Hilbert scales for the nonlinear statistical inverse problem with a general noise. The regularizing norm in this scheme is stronger than the norm in Hilbert space. We focus on developing a theoretical analysis for this scheme based on the conditional stability estimates. We utilize the concept of the distance function to establish the high probability estimates of the direct and reconstruction error in Reproducing kernel Hilbert space setting. Further, the explicit rates of convergence  in terms of sample size are established for the oversmoothing case and the regular case over the regularity class defined through appropriate source condition. Our results improve and generalize previous results obtained in related settings.
\end{abstract}
\maketitle

\section{Introduction}
In this paper, we study the nonlinear operator equation
\begin{equation}\label{Op.eqn}
A(\fp)=\gp
\end{equation}
with the infinite dimensional separable real Hilbert spaces~$\HH$ and~$\HH'$ with the inner products~$\scalar{\cdot}{\cdot}{\HH}$ and~$\scalar{\cdot}{\cdot}{\HH'}$, respectively. Here,~$\HH'$ is the space of functions between a Polish space~$X$ and a real separable Hilbert space~$Y$. We observe the noisy values of the function~$\gp$ at the inputs~$x_i$:
\begin{equation}\label{Model}
y_i=\gp(x_i)+\varepsilon_i
\end{equation}
for~$1\leq i \leq m$. Here,~$m$ is the number of observations which is called the sample size. In contrary to the direct learning scheme where we estimate the function~$\gp$, here we aim to estimate the function~$\fp$ directly from the observations. 

A common approach to stably approximate the solution of equation~\eqref{Op.eqn} is the Tikhonov regularization scheme. Sometimes, we have the information about the true solution, e.g., the true solution may be differentiable. To incorporate this information, we employ the Tikhonov regularization in Hilbert scales. This scheme consists of a functional which is a linear combination of a fidelity term measuring the fitness of the data and a penalty term in a stronger norm forcing the smoothness in the approximated solution. To define this scheme, we introduce a densely defined, unbounded, closed, linear, self-adjoint, strictly positive operator~$L : \mathcal{D}(L)\subset \HH \to \HH$ such that for some~$\elll>0$,
\begin{equation}\label{L.unbound}
 \elll\norm{f}_{\HH} \leq \norm{Lf}_{\HH} \quad\forall f \in \dl.
\end{equation}
Here, we observe that~$L^{-1}:\HH\to\HH$ is a bounded operator from the strict positivity of the operator~$L$.

The Tikhonov functional for the considered nonlinear inverse problem with sample~$\zz=\brac{(x_i,y_i)}_{i=1}^m$ is given by
$$\mathcal{E}_{\zz,\la}(f)=\brac{\frac{1}{m}\sum\limits_{i=1}^m\norm{A(f)(x_i)-y_i}_Y^2+\la\norm{L\paren{f-\fbar}}_{\HH}^2},$$
where~$\fbar\in\mathcal{D}(A)\cap\mathcal{D}(L)$ is an initial guess. The regularization parameter~$\la > 0$ has to balance both terms appropriately. Then, the Tikhonov regularization scheme in Hilbert scales can be defined as
\begin{equation}\label{fzl}
\fz = \argmin\limits_{f\in\mathcal{D}(A)\cap\mathcal{D}(L)} \mathcal{E}_{\zz,\la}(f).
\end{equation}
For the continuous and weakly sequentially closed operator~$A$, there exists a global solution of the regularization scheme in~\eqref{fzl}. But it is not necessarily unique, since~$A$ is nonlinear (see~\cite[Section 4.1.1]{Schuster2012}).

We consider the Hilbert scales~$\HH_a$ generated by the operator~$L$.  Here the spaces~$\HH_a := \mathcal{D}(L^a)$ are Hilbert spaces equipped with the inner product~$\inner{ f,g }_{\HH_a}=\inner{ L^a f,L^a g}_\HH,~~f, g \in\HH_a$. For the Hilbert scales, we have the well-known interpolation inequality
\begin{equation}\label{interpolation}
\norm{f}_{\HH_b}\leq\norm{f}_{\HH_a}^{\frac{c-b}{c-a}}\norm{f}_{\HH_c}^{\frac{b-a}{c-a}},\qquad f\in \HH_c
\end{equation}
which holds for any~$a < b < c$.

%In classical inverse problems, Hilbert scales are widely considered in regularization schemes~\cite{Engl,Nair2005}. Learning in Hilbert scales is introduced in~\cite{Rastogi2020b}. 

%Generally, the two steps approaches are considered in the classical inverse problems~\cite{Bissantz2004,Engl,Hohage,Schuster2012}. First, an approximate~$g_\delta$ of~$\gp$ is estimated from the observations~$\brac{(x_i,y_i)}_{i=1}^m$ or assumed to be known. Then, the solution of the operator equation~$A(\fp)=g_\delta$ is obtained using the regularization schemes in the second step. Some steps are taken to directly estimate the solution of~\eqref{Model} directly from the data but the error bounds does not converges to~$0$ as the sample size goes to~$\infty$~\cite{Krebs2009,Nair2007}.

The regularization schemes in Hilbert scales have been well-studied and analysed under the different assumptions in the classical inverse problems~\cite{Bissantz2004,Engl,Hohage,Schuster2012}. In learning theory, the general regularization in Hilbert scales is introduced for the linear inverse problems and established the rates of convergence~\cite{Rastogi2020b}. Nicole et al.~\cite{Mucke2020} studied the Stochastic Gradient Descent in Hilbert scales and the author provided different examples of Hilbert scales in learning. Further, the authors discussed the error estimates for the Stochastic Gradient Descent scheme for the direct learning problem. In the paper~\cite{Rastogi2020a}, the rates of convergence are established for nonlinear statistical inverse learning problems in the RKHS setting. The authors considered some assumptions on the nonlinearity of the operator~$A$ such as Fr{\'e}chet differentiability of the operator, Lipschitz continuity of the Fr{\'e}chet derivative, and the link condition to transfer the smoothness in terms of the operator~$L$ to the covariance operator. 

Here, we consider the nonlinear inverse learning problem in Hilbert scales satisfying conditional stability estimates characterized by general concave index functions. We use the Tikhonov regularization schemes to obtain the stable approximate solution in the RKHS framework. Werner and Hofmann~\cite{Werner2019a} illustrated the validity of the conditional stability estimates in different models and real-world situations. The authors showed that the derivative of~$A$ is not always necessary for this condition.

For the regularization schemes in the RKHSs, generally, we consider the smoothness by the source condition in terms of the covariance operator which implies the rates of convergence. The covariance operator depends on the considered kernel and unknown probability measure. Therefore, the source condition cannot be verified practically. Moreover, the misspecified kernel affects the source condition and consequently, the rates of convergence for the regularization schemes. Here, we consider the smoothness of the true solution in terms of the known operator~$L$ which can be checked in practice. We divide the smoothness into two cases: the regular case (i.e.,~$\fp\in \dl$) and the oversmoothing case (i.e.,~$\fp\notin \dl$). The oversmoothing case is very delicate. We consider that the regularized solution in Hilbert scales~\eqref{fzl} belongs to~$\dl$. But the true solution does not belong to~$\dl$ in the oversmoothing case. The analysis is also tricky for nonlinear inverse problems since the Tikhonov regularization in Hilbert Scales does not have an explicit solution. The analysis starts with the step~$\mathcal{E}_{\zz,\la}(\fz)\leq \mathcal{E}_{\zz,\la}(\fp)$. But~$\mathcal{E}_{\zz,\la}(\fp)$ is not well-defined in the oversmoothing case (since~$\fp\notin \dl$). We will utilize the concept of distance functions to overcome this problem. 

%This problem can be solved by considering the concept of the distance functions.
%In this paper, we consider Tikhonov regularization in Hilbert scales and analyze its convergence using the conditional stability estimates.
% corresponding to~$r\geq 1$  and~$r< 1$ 

The main results of our paper can be summarized as follows:
\begin{itemize}
\item[-] We discuss the rates of convergence for the Tikhonov regularization in Hilbert Scales under a conditional stability assumption for the inverse problem. 

\item[-] We obtain the error estimates in the absence of the widely-considered source condition. We will use the concept of the distance functions for this. 

\item[-] We establish the error bounds in both the regular case and the oversmoothing case for the appropriate benchmark smoothness. 

\end{itemize}

The manuscript is organized as follows: In Section~\ref{Sec:Notation}, we present the basic definitions, notation, and assumptions required in our analysis. In Section~\ref{Sec:Analysis}, we state and prove our main results. Here, we discuss the rates of convergence for Tikhonov regularization in Hilbert scales in the probabilistic sense. In Section~\ref{Sec:Explicit.rates}, we present the explicit rates in terms of sample size by bounding the distance functions. In Appendix, we state the probabilistic estimate of perturbation inequalities.

%We discuss  In Section~\ref{Sec:Discussion}, the presentation closes with the discussion on the relation to previous results on regularization in Hilbert scales. 

%\begin{enumerate}[(i)]
%\item For all~$x\in X$ and~$y\in Y$, the function~$K_xy=K(\cdot,x)y$, defined by
%  \[z \in X \mapsto (K_xy)(z)=K(z,x)y \in Y,\] belongs to~$\HH$; this allows us to define the linear mapping~$K_x: Y \rightarrow \HH: y \mapsto K_xy$.
%\item The span of the set~$\brac{K_xy:x\in X, y\in Y}$ is dense in~$\HH$.
%\item For all~$f\in \HH$,~$x\in X$ and~$y \in Y$,~$\inner{ f(x),y}_Y=\inner{ f,K_xy}_{\HH}$, in other words~$f(x) = K_x^* f$ (reproducing property).
%\end{enumerate}

\section{Notation and Assumptions}\label{Sec:Notation}

Let the input space~$X$ be a Polish space and the output space~$(Y, \inner{ \cdot,\cdot}_Y)$ be a real separable Hilbert space. We consider the joint probability measure~$\rho$ on the sample space~$Z=X\times Y$. We denote the marginal distribution on~$X$ by~$\nu$ and the conditional distribution of~$y$ given~$x$ by~$\rho(y|x)$. Therefore, the measure~$\rho$ can be split as~$\rho(x, y) = \rho(y|x)\nu(x)$. 

For the probability measure~$\rho$ on~$X\times Y$, we assume that
\begin{equation}\label{Y.leq.M.1}
\int_Z\norm{y}_Y^2~d\rho(x,y)<\infty.
\end{equation}

For the considered model~$y=\gp(x)+\varepsilon$ with centred noise~$\varepsilon$ we find~$\int_Y y d\rho(y|x)= \gp(x)$ provided that the conditional expectation w.r.t.~$\rho$ of~$y$ given~$x$ exists (a.s.). This holds true under Assumption~\eqref{Y.leq.M.1}. This fact together with the operator equation~\eqref{Op.eqn} motivates us to consider the following assumption. 
\begin{assumption}[The true solution]\label{Ass:fp}
The conditional expectation w.r.t.~$\rho$ of~$y$ given~$x$ exists (a.s.), and there exists unique~$\fp \in \mathrm{int}(\mathcal{D}(A))\subset\HH~$ such that
  \begin{equation*}
\int_Y y d\rho(y|x)= A(\fp)(x), \text{ for all } x\in X.
  \end{equation*}
\end{assumption}
Here,~$\fp$ is the true solution of equation~\eqref{Op.eqn} which we aim at estimating. Here, we want to mention that the function~$\fp$ is also the minimizer of the expected risk considered in~\cite{Rastogi2020a}. 

We consider a \emph{Bernstein-type assumption} for the noise~$\varepsilon=y-A(\fp)(x)$:

\begin{assumption}[Noise condition]\label{Ass:noise}
There exist some constants~$M,\Sigma$ such that for almost all~$x\in X$,
\begin{equation*}
\int_Y\left(e^{\norm{\varepsilon}_Y/M}-\frac{\norm{\varepsilon}_Y}{M}-1\right)d\rho(y|x)\leq\frac{\Sigma^2}{2M^2}.
\end{equation*}
\end{assumption}

We want to utilize the properties of the Reproducing kernel Hilbert spaces (RKHSs) in our analysis. Therefore, we assume that~$\operatorname{Ran}(A)$ is contained in a vector-valued Reproducing kernel Hilbert space (RKHSvv).   The RKHSvv~$\HH_K$ arises from the operator-valued positive semi-definite kernel~$K:X\times X\to \mathcal{L}(Y)$~\cite{Micchelli1}. Here,~$\mathcal{L}(Y)$ is the Banach space of bounded linear operators.

\begin{assumption}[Vector valued reproducing kernel Hilbert space~$\HH'$] \label{Ass:kernel}
Suppose~$\HH'$ is an RKHSvv of functions~$g:X\to Y$ corresponding to the kernel~$K:X\times X\to \mathcal{L}(Y)$ such that
  \begin{enumerate}[(i)]
  \item For all~$x\in X$,~$K_x:Y\to\HH'$ is a Hilbert-Schmidt
    operator, and
    \[\kappa^2:=\sup_{x \in X} \norm{K_x}^2_{HS} = {\sup_{x \in
          X}\tr(K_x^*K_x)}<\infty.\] 
  \item The real-valued function~$\varsigma:X\times X \to \RR$, defined by~$\varsigma(x,t)=\inner{ K_tv,K_xw}_{\HH'}$, is measurable~$\forall v,w\in Y$.
  \end{enumerate}
\end{assumption}

This assumption implies that~$\HH'\subset \LL$. We denote the canonical injection map~$\HH'$ to~$\LL$ by~$\ip$ and the corresponding covariance operator is~$\tp:= \ip^{\ast}\ip$. From the above assumption, we see that the covariance operator is positive and trace class. The covariance operator is very important in our convergence analysis. We will need some regularity assumptions in terms of the covariance operator on the marginal probability measure~$\nu$ to achieve the uniform convergence rates for the regularized solution~\eqref{fzl}. 

The error estimates studied in our analysis are based on the smoothness of the true solution and the behaviour of the effective dimension. The error estimates and the optimal parameter choice depend on the effective dimension for the regularization methods in reproducing kernel Hilbert spaces~\cite{Caponnetto,Blanchard,Rastogi2020}. To achieve the fast convergence rates, we introduce the concept of the effective dimension~$\mathcal{N}(\la)$~\cite{Zhang}:
$$\mathcal{N}(\la):=Tr\left((\tp+\la I)^{-1}\tp\right), \text{  for }\la>0.$$

The effective dimension is a continuous, decreasing function of~$\la$. The effective dimension is finite, since the operator~$\tp$ is a trace class, and we get
$$
\mathcal{N}(\la)\leq \norm{(\tp+\la I)^{-1}}_{\mathcal{L}(\HH)}Tr\left(\tp\right) \leq \frac{\kappa^2}{\la}.
$$
%Finally, the spectral decomposition of~$\tp$ gives
%\begin{equation}\label{nl}
%  \mathcal{N}(\la)\geq \frac{\norm{\tp}_{\lh}}{\norm{\tp}_{\lh}+\la}\geq \frac{1}{2}\qquad \text{for}\quad \la \leq \norm{\tp}_{\lh}.
%\end{equation}

The different behaviours of the eigenvalues of the covariance operator lead to different decay rates of the effective dimension~\cite{Lu2020}. Under the different scenarios of the effective dimension, we will get the explicit convergence rates in the next section.

In order to establish the error estimate, we introduce the discrete operators for the samples. For the ordered set~$(\xx)_i=x_i$, we define the {\it Sampling Operator }
$$(\sx(g))_i=g(x_i)\quad \text{and} \qquad 1\leq i \leq m.$$

We define the inner product space~$Y^m$ with the inner product~$\inner{\yy,\yy'}_{m}=\frac{1}{m}\inner{y_i,y_i'}_{Y}$ for~$(\yy)_i=y_i$ and~$(\yy')_i=y_i'$ for~$1\leq i \leq m$.
Then, we get the expression of its adjoint~$\sx^*$ as
$$\sx^*\yy=\frac{1}{m}\sum_{i=1}^m K_{x_i} y_i,~~~~\forall \yy\in Y^m.$$

It can be easily checked that under Assumption~\ref{Ass:kernel},~$\norm{\sx}_{\HH'\to Y^m}\leq \kappa$.

We need to make some assumptions about the nonlinear structure of operator~$A$. Following the work of Werner and Hofmann~\cite{Werner2019a}, we consider the following assumption on~$A$,~$\da$, and~$\fp$. To introduce this assumption, we define the closed balls~$B_\mu^u(\fp)=\brac{f\in\HH_u:\norm{f-\fp}_u\leq \mu}$  in~$\HH_u~(u \in \RR)$ with center~$\fp \in \HH_u$ and radius~$\mu$~$(0 < \mu\leq 1)$ and  their intersections with the domain of~$A$,~$\mathcal{D}_\mu^u(\fp):=B_\mu^u(\fp)\cap\da$. For the simplicity, we will denote~$B_\mu^0(\fp)$ and~$\mathcal{D}_\mu^0(\fp)$ and by~$B_\mu(\fp)$ and~$\mathcal{D}_\mu(\fp)$.

\begin{assumption}\label{Ass:A}
\begin{enumerate}[(i)]

\item The domain~$\da$ of~$A$ is a convex and closed subset of~$\HH$.

\item The operator~$A : \da \to \HH'$ is weak-to-weak sequentially continuous\footnote{i.e.,~$f_n \rightharpoonup \hat{f}\in\HH$ with~$f_n \in \da$,~$n \in \NN$, and~$\hat{f} \in \da$ implies~$A(f_n)\rightharpoonup  A(\hat{f}) \in \HH'$.}.

\item The operator~$A$ is Lipschitz continuous with Lipschitz constant~$\ella < \infty$ in a sufficiently large ball~$\mathcal{B}_d(\fp)$,
\begin{equation*}\label{A.cont}
\norm{A(f)-A(\tilde{f})}_{\HH'}\leq \ella \norm{f-\tilde{f}}_{\HH} \qquad\forall f ,\tilde{f}\in \mathcal{B}_d(\fp) \cap \mathcal{D}(A) \subset \HH,
\end{equation*}

\item There exist constants~$p\geq0$,~$s> 0$,~$\al> 0$,~$d > 0$,~$\theta\geq 0$ and~$Q \subset\mathcal{D}_d^\theta(\fp)\cap \da$ such that
\begin{equation*}
\norm{f-\fp}_{\HH_{-p}}\leq \al\norm{\ip\sbrac{A(f)-A(\fp)}}_{\LL}^s
\end{equation*}
holds for all~$f \in Q$, where the constant~$\al$ may depend on~$p$,~$s$, and~$Q$.
\end{enumerate}
\end{assumption}

Assumption~\ref{Ass:A}~(iv) is called the conditional stability estimate which helps us to characterize the degree of ill-posedness of inverse problems. Here, we note that operator~$A$ may not be differentiable, (see the examples in~\cite{Werner2019a}).

\section{Convergence analysis}\label{Sec:Analysis}
The assertions about the convergence of Tikhonov-regularized solution~$\fz$ to the true solution~$\fp$ are formulated in this section. First of all, we introduce some standard quantities required to establish the error estimates. We denote
\begin{align}
\Theta_{\zz}:=&\norm{(\tp +\la I)^{-1/2}\sx^*\bm{\varepsilon}}_{\HH'}\qquad \text{for} \quad \bm{\varepsilon}=\sx \sbrac{A(\fp )}-\yy, \label{theta.z} \\
%\intertext{and}
\Psi_\xx:=&\norm{(\tp +\la I)^{-1/2}(\tp-\tx)}_{\mathcal{L}_2(\HH')}.  \label{psi}
%\Gamma_{\xx}:=&\norm{(\tx+\la I)^{-1/2}(\tp +\la I)^{1/2}}_{\mathcal{L}(\HH')}. \label{gamma}
\end{align}
%(\mathbb{\varepsilon})_i=A(\fp)(x_i)-y_i \quad \text{for} \quad 1\leq i \leq  m
 
The probabilistic estimates of the above quantities are given in Appendix~\ref{Sec:prob.est}. We will use the following standard assumption on the sample size~$m$ and the regularization parameter~$\la$ for our probabilistic estimates:
\begin{equation}\label{l.la.condition}
\mathcal{N}(\la) \leq  m\la \qquad \text{and}\qquad 0<\la\leq 1. 
\end{equation}

Now, we introduce the concept of the distance function (also known as `approximate source conditions') which can be used in the absence of the source condition for~$\fp$~\cite{Baumeister1987,Smale2003}. It measures the violation of a benchmark smoothness of the true solution. It becomes very important in the `oversmoothing case'~$\fp \notin\DD(L)$ for regularization in Hilbert Scales. 

\begin{definition}[Approximate source condition]
For given~$q$, we define the distance function~$d : [0, \infty)\to[0, \infty)$ by
\begin{align}\label{Defi:dist}
\dr=\inf\brac{\norm{f-\fp}_{\HH}:f-\fbar= L^{-q}v \text{ and }\norm{v}_{\HH} \leq R},\quad R>0.
\end{align}
\end{definition}
Here~$q$ defines the benchmark smoothness. Let~$\fr$ be the minimizing element of the above problem. Here, we also denote the quantities~$\dra=\norm{\ip\sbrac{A(\fr)-A(\fp)}}_{\LL}$ and~$\drp=\norm{\fr-\fp}_{\HH_{-p}}$.

Here, we note that when the true solution is of the form~$\fp=L^{-q}u$ with~$\norm{u}_{\HH} \leq \bar{R}$, then the distance function~$\DD(\bar{R})=0$ and the minimizer~$\fp^{\bar{R}}=\fp$.

The error analysis starts using the fact that~$\fz$ is the minimizer of the Tikhonov functional~\eqref{fzl}. We get the deterministic expressions~\eqref{err_1.2},~\eqref{p_s.2} for the quantities~$\norm{\ip\sbrac{A(\fz)-A(\fp)}}_{\LL}$ and~$\norm{L(\fz-\fp)}_{\HH}$ after some rearrangement, and using Cauchy-Schwarz inequality, Young's inequality. After the simplification and using the probabilistic estimates from Proposition~\ref{main.bound} we get the error estimates in terms of the sample size~$m$, the regularization parameter~$\la$, and distance function by $R(\la)$. The distance function can be measured using the source condition for~$\fp$ (see Section~\ref{Sec:Explicit.rates}). Consequently, we get the explicit dependency~$\la \to R(\la)$. The estimates depend on the effective dimension which will be explicitly expressed in terms of~$\la$ by using the different decay conditions on the effective dimension. Then, the bounds can be expressed explicitly in terms of~$\la$ and~$m$ for the given smoothness of the solution~$\fp$. In Section~\ref{Sec:Explicit.rates}, the a-priori choice of regularization parameter will be obtained by balancing the terms in the error bounds. 

%The above error estimates depends on the distance function~$\dra$. The distance function can be estimated explicitly under the source condition.

\begin{theorem}\label{err.upper.bound.p.1}
Let Assumptions~\ref{Ass:fp}--\ref{Ass:A}, and condition~\eqref{l.la.condition} hold true. Let~$1\leq q \leq 2 + p$,~$q(s-1)\leq p+s$ and~$\fz,\fr\in Q$ (for sufficiently large sample size~$m$) for~$p$,~$s$,~$Q$,~$q$ defined in Assumption~\ref{Ass:A}~(iv),~\eqref{Defi:dist}. Then, for all~$0<\eta<1$, the following  bounds hold with the confidence~$1-\eta$:
\begin{align*}
\norm{\ip\sbrac{A(\fz)-A(\fr)}}_{\LL} \leq \widetilde{C}\la^{\frac{1}{2}}&\brac{\sqrt{\frac{\mathcal{N}(\la)}{m\la}}+R(\la)^{\frac{p+1}{(p+q)-s(q-1)}}\la^{\frac{s(q-1)}{2(p+q)-2s(q-1)}}}\log\paren{\frac{4}{\eta}},\\
\norm{L(\fz-\fr)}_{\HH}\leq \widetilde{C}&\brac{\sqrt{\frac{\mathcal{N}(\la)}{m\la}}+R(\la)^{\frac{p+1}{(p+q)-s(q-1)}}\la^{\frac{s(q-1)}{2(p+q)-2s(q-1)}}}\log\paren{\frac{4}{\eta}},\\
\norm{\fz-\fr}_{\HH}\leq \widetilde{C}\la^{\frac{s}{2(p+1)}}&\brac{\sqrt{\frac{\mathcal{N}(\la)}{m\la}}+R(\la)^{\frac{p+1}{(p+q)-s(q-1)}}\la^{\frac{s(q-1)}{2(p+q)-2s(q-1)}}}^{\frac{(p+s)}{(p+1)}}\log\paren{\frac{4}{\eta}}.
\end{align*}
Here,~$R(\la)$ is the solution of the equation~$\dra R^{-\frac{p+1}{(p+q)-s(q-1)}}= \la^{\frac{p+q}{2(p+q)-2s(q-1)}}$ for~$\dra \neq 0$ and~$R(\la)$ is a fixed constant for~$\dra =0$.
\end{theorem}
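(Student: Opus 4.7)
The plan is to start from the defining inequality $\mathcal{E}_{\zz,\la}(\fz)\leq \mathcal{E}_{\zz,\la}(\fr)$, which is meaningful here because $\fr\in\dl\cap\da$ (by construction of the distance function with $q\geq 1$ making $L^{-q}v\in\dl$), whereas $\fp$ need not lie in $\dl$. Expanding both functionals, substituting $y_i=A(\fp)(x_i)+\varepsilon_i$, and collecting terms produces the deterministic inequality
\begin{align*}
\norm{\sx[A(\fz)-A(\fp)]}_{m}^2+\la\norm{L(\fz-\fbar)}_\HH^2\leq \norm{\sx[A(\fr)-A(\fp)]}_{m}^2+\la\norm{L(\fr-\fbar)}_\HH^2+2\inner{\sx[A(\fz)-A(\fr)],\bm{\varepsilon}}_m.
\end{align*}
The noise cross term is bounded by Cauchy-Schwarz and the adjoint identity $\inner{\sx g,\bm\varepsilon}_m=\inner{g,\sx^*\bm\varepsilon}_{\HH'}$, inserting $(\tp+\la I)^{1/2}(\tp+\la I)^{-1/2}$ to produce $\Theta_\zz$ as defined in \eqref{theta.z}, and then using Young's inequality to absorb the $(\tp+\la I)^{1/2}[A(\fz)-A(\fr)]$ factor. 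The regularization terms are split via $\norm{L(\fz-\fbar)}_\HH^2=\norm{L(\fz-\fr)}_\HH^2+2\inner{L(\fz-\fr),L(\fr-\fbar)}_\HH+\norm{L(\fr-\fbar)}_\HH^2$, which yields a useful $\la\norm{L(\fz-\fr)}_\HH^2$ on the left and leaves the cross term $2\la\inner{L(\fz-\fr),L(\fr-\fbar)}_\HH$ to be controlled.

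For the latter, I would exploit the representation $\fr-\fbar=L^{-q}v$ with $\norm{v}_\HH\leq R$, rewriting the cross term as $\la\inner{L^{2-q}(\fz-\fr),v}_\HH$ and bounding by $\la R\norm{\fz-\fr}_{\HH_{2-q}}$. The interpolation inequality \eqref{interpolation} between $\HH_{-p}$, $\HH_{2-q}$, $\HH_1$ (valid exactly because $1\leq q\leq 2+p$) then gives
\begin{align*}
\norm{\fz-\fr}_{\HH_{2-q}}\leq \norm{\fz-\fr}_{\HH_{-p}}^{(q-1)/(1+p)}\norm{L(\fz-\fr)}_\HH^{(p+2-q)/(1+p)}.
\end{align*}
At this stage I would convert empirical quantities to population quantities using the perturbation quantity $\Psi_\xx$ from \eqref{psi} (so that $\norm{\sx[A(\fz)-A(\fp)]}_{m}^2$ can be compared to $\norm{\ip[A(\fz)-A(\fp)]}_{\LL}^2$), invoking the probabilistic estimates from Proposition \ref{main.bound} and condition \eqref{l.la.condition} to control $\Theta_\zz$ and $\Psi_\xx$ with the claimed confidence $1-\eta$.

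The conditional stability estimate (Assumption \ref{Ass:A}(iv)), applicable because $\fz,\fr\in Q$, gives $\norm{\fz-\fr}_{\HH_{-p}}\leq \al\,(\norm{\ip[A(\fz)-A(\fp)]}_{\LL}^s+\dra^s)$ after a triangle inequality. Plugging this into the interpolation bound turns the cross term into a power product involving $\norm{\ip[A(\fz)-A(\fr)]}_{\LL}^{s(q-1)/(1+p)}$ and $\norm{L(\fz-\fr)}_\HH^{(p+2-q)/(1+p)}$, which are exactly the two quantities we are trying to estimate. Applying Young's inequality with the exponents dictated by these powers—this is where the constraint $q(s-1)\leq p+s$ is used, to guarantee that both powers are less than one so absorption into the left-hand side is legal—yields a self-bounding inequality on the pair $(\norm{\ip[A(\fz)-A(\fr)]}_{\LL},\norm{L(\fz-\fr)}_\HH)$.

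Solving that inequality produces the two leading terms $\sqrt{\mathcal{N}(\la)/(m\la)}$ (from the noise part, i.e.\ $\Theta_\zz$) and an $R$-dependent term of the form $R^{(p+1)/((p+q)-s(q-1))}\la^{\text{exponent}}$; equating this $R$-dependent term with the approximation error $\dra$ (which is itself a function of $R$ through $R(\la)$) gives the balancing equation $\dra R^{-(p+1)/((p+q)-s(q-1))}=\la^{(p+q)/(2(p+q)-2s(q-1))}$ in the statement. The third bound, on $\norm{\fz-\fr}_\HH$, is then a one-line application of the interpolation inequality with $a=-p$, $b=0$, $c=1$, combined with the already-established bounds on $\norm{\fz-\fr}_{\HH_{-p}}$ (via conditional stability) and on $\norm{L(\fz-\fr)}_\HH$. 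The main obstacle is the bookkeeping: tracking exponents through the several applications of Young's inequality and verifying that the constraint $q(s-1)\leq p+s$ is exactly what makes absorption legal throughout, while simultaneously keeping track of high-probability events for $\Theta_\zz$ and $\Psi_\xx$ so that a single confidence $1-\eta$ controls all three estimates.
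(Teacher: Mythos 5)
Your proposal is correct and follows essentially the same route as the paper's proof: the comparison $\mathcal{E}_{\zz,\la}(\fz)\leq\mathcal{E}_{\zz,\la}(\fr)$ with the distance-function minimizer $\fr$ in place of the possibly non-admissible $\fp$, the interpolation inequality between $\HH_{-p}$, $\HH_{2-q}$ and $\HH_1$ combined with the conditional stability estimate (via a triangle inequality through $\fp$, which is where $\dra$ enters), Young's inequality with the exponents $\frac{2p+2}{p-q+2}$ and $\frac{2p+2}{p+q}$ under the constraint $q(s-1)\leq p+s$, the $(\tp+\la I)^{-1/2}$ splitting to produce $\Theta_\zz$ and $\Psi_\xx$, and the final balancing equation for $R(\la)$ and interpolation step for $\norm{\fz-\fr}_{\HH}$. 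The only differences are cosmetic (centering the data-misfit expansion at $A(\fp)$ rather than $A(\fr)$, and a slightly different but equivalent factorization of the noise cross term).
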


\begin{proof}
By the definition of~$\fz$ as the solution to the minimization problem in~\eqref{fzl}, we have
\begin{equation*}
\frac{1}{m}\sum\limits_{i=1}^m\norm{\sbrac{A(\fz)}(x_i)-y_i}_Y^2+\la \norm{L(\fz-\fbar)}_{\HH}^2\leq \frac{1}{m}\sum\limits_{i=1}^m\norm{\sbrac{A(\fr)}(x_i)-y_i}_Y^2+\la \norm{L(\fr-\fbar)}_{\HH}^2.
\end{equation*}

We re-express the above inequality as follows,
\begin{equation*}\label{idea.1}
\norm{\sx \sbrac{A(\fz)}-\yy}_m^2+\la \norm{L(\fz-\fbar)}_{\HH}^2\leq \norm{\sx \sbrac{A(\fr)}-\yy}_m^2+\la \norm{L(\fr-\fbar)}_{\HH}^2
\end{equation*}
which implies
\begin{align*}
&\norm{\sx\sbrac{A(\fz)-A(\fr)}}_m^2+2\inner{\sx\sbrac{A(\fz)-A(\fr)}, \sx \sbrac{A(\fr )}-\yy}_m+\la\norm{L(\fz-\fr)}_{\HH}^2   \\  \nonumber
\leq& 2\la\inner{ L(\fz-\fr),L(\fbar-\fr)}_{\HH}.
\end{align*}

Then we have,
\begin{align}\label{B1.1}
&\norm{\ip\sbrac{A(\fz)-A(\fr)}}_{\LL}^2+\la\norm{L(\fz-\fr)}_{\HH}^2  \\ \nonumber
\leq  & 2\la\inner{ L(\fz-\fr),L(\fbar-\fr)}_{\HH}+2\inner{\ip\sbrac{A(\fz)-A(\fr)},\ip\sbrac{A(\fr)-A(\fp)}}_{\HH'}\\   \nonumber
&+2\inner{A(\fz)-A(\fr),(\tp-\tx)\sbrac{A(\fr)-A(\fp)}+\sx^*\bm{\varepsilon}}_{\HH'}  \\  \nonumber
&+\inner{A(\fz)-A(\fr),(\tp-\tx)\sbrac{A(\fz)-A(\fr)}}_{\HH'}.
\end{align}

Using the interpolation inequality~\eqref{interpolation}, the definition of distance function~\eqref{Defi:dist} and for~$\fz \in Q$ under Assumption~\ref{Ass:A}, we obtain
\begin{align}\label{B_2.1}
\inner{ L(\fz-\fr),L(\fbar-\fr)}_{\HH}\leq & \norm{\fr-\fbar}_{\HH_{q}}\norm{\fz-\fr}_{\HH_{2-q}}\\  \nonumber
\leq & R\norm{L(\fz-\fr)}_{\HH}^{\frac{p-q+2}{p+1}}\norm{\fz-\fr}_{\HH_{-p}}^{\frac{q-1}{p+1}}.
%\leq & C\norm{L(\fz-\fr)}_{\HH}^{\frac{p-q+2}{p+1}}\norm{\ip\sbrac{A(\fz)-A(\fr)}}_{\LL}^{\frac{s(q-1)}{p+1}},
\end{align}
%where~$C=\al^{\frac{q-1}{p+1}}\norm{\fr-\fbar}_{\HH_{q}}$.

We have
\begin{align*}
\norm{\fz-\fr}_{\HH_{-p}} \leq &\norm{\fz-\fp}_{\HH_{-p}}+\norm{\fp-\fr}_{\HH_{-p}}\\
\leq &\al\norm{\ip\paren{A(\fz)-A(\fp)}}_{\LL}^s+\al\norm{\ip\paren{A(\fp)-A(\fr)}}_{\LL}^s\\
\leq & \al\norm{\ip\paren{A(\fz)-A(\fp)}}_{\LL}^s+2\al\norm{\ip\paren{A(\fp)-A(\fr)}}_{\LL}^s
\end{align*}
which implies
\begin{align*}
\norm{\fz-\fr}_{\HH_{-p}}^{\frac{2(q-1)}{(p+q)}} \leq & 2 \al^{\frac{2(q-1)}{(p+q)}}\norm{\ip\paren{A(\fz)-A(\fp)}}_{\LL}^{\frac{2s(q-1)}{(p+q)}}+ 4 \al^{\frac{2(q-1)}{(p+q)}}\dra^{\frac{2s(q-1)}{(p+q)}},
\end{align*}
where~$\dra=\norm{\ip\sbrac{A(\fr)-A(\fp)}}_{\LL}$.

Now we apply Young's inequality ($ab\leq \frac{a^u}{u}+\frac{b^v}{v}$ for~$\frac{1}{u}+\frac{1}{v}=1$) with~$a=\paren{\frac{u}{4}}^{\frac{1}{u}}\norm{L(\fz-\fr)}_{\HH}^{\frac{2}{u}}$,~$b=\paren{\frac{4}{u}}^{\frac{1}{u}}R\norm{\fz-\fr}_{\HH_{-p}}^{\frac{q-1}{p+1}}$,~$u=\frac{2p+2}{p-q+2}$ and~$v = \frac{2p+2}{p+q}$ in~\eqref{B_2.1}, and this implies
\begin{align*}
\inner{ L(\fr-\fz),L(\fr-\fbar)}_{\HH}\leq &\frac{1}{4}\norm{L(\fz-\fr)}_{\HH}^2+CR^{\frac{2p+2}{p+q}}\norm{\fz-\fr}_{\HH_{-p}}^{\frac{2(q-1)}{p+q}},
\end{align*}
where~$C=\frac{1}{v}\paren{\frac{4}{u}}^{\frac{v}{u}}$. Now, using~\eqref{Ass:A} we get,
\begin{align}\label{B2.1}
&\inner{ L(\fr-\fz),L(\fr-\fbar)}_{\HH}\\   \nonumber
\leq &\frac{1}{4}\norm{L(\fz-\fr)}_{\HH}^2+C'^2 R^{\frac{2p+2}{p+q}}\norm{\ip\paren{A(\fz)-A(\fp)}}_{\LL}^{\frac{2s(q-1)}{(p+q)}}+2C'^2R^{\frac{2p+2}{p+q}}\dra^{\frac{2s(q-1)}{(p+q)}},
\end{align}
where~$C'^2=2C\al^{\frac{2(q-1)}{(p+q)}}$.

To estimate the last two terms in~\eqref{B1.1} we consider the inequality
\begin{align*}
\inner{f,g}_{\HH'}= & \la\inner{f,(\tp+\la I)^{-1}g}_{\HH'}+\inner{f,\tp(\tp+\la I)^{-1}g}_{\HH'}\\
\leq & \brac{\sqrt{\la}\norm{f}_{\HH'}+\norm{\ip f}_{\LL}}\norm{(\tp+\la I)^{-1/2}g}_{\HH'}.
\end{align*}
By taking~$f=A(\fz)-A(\fr)$, and~$g=(\tp-\tx)\sbrac{A(\fz)-2 A(\fp)+A(\fr)}+2\sx^*\bm{\varepsilon}$ and using~\eqref{L.unbound},~Assumption~\ref{Ass:A}~(iii) we get,
\begin{align}\label{B3.1}
&\inner{A(\fz)-A(\fr),(\tp-\tx)\sbrac{A(\fz)-2 A(\fp)+A(\fr)}+2\sx^*\bm{\varepsilon}}_{\HH'}\\  \nonumber
\leq & \brac{\ell_1\Psi_\xx+2\Theta_{\zz}}\brac{\sqrt{\la}\norm{A(\fz)-A(\fr)}_{\HH'}+\norm{\ip\sbrac{A(\fz)-A(\fr)}}_{\LL}}\\   \nonumber
\leq &\brac{\ell_1\Psi_\xx+2\Theta_{\zz}}\brac{\ella\sqrt{\la}\norm{\fz-\fr}_{\HH}+\norm{\ip\sbrac{A(\fz)-A(\fr)}}_{\LL}}\\   \nonumber
\leq &\brac{\ell_1\Psi_\xx+2\Theta_{\zz}}\brac{\ell\sqrt{\la}\norm{L(\fz-\fr)}_{\HH}+\norm{\ip\sbrac{A(\fz)-A(\fr)}}_{\LL}},
\end{align}
where~$\ell_1=\norm{A(\fz)-2A(\fp)+A(\fr)}_{\HH'}$,~$\ell=\frac{\ella}{\elll}$ and~$\ella$,~$\elll$,~$\Theta_\zz$,~$\Psi_\xx$ are defined in~Assumption~\ref{Ass:A}~(iii),~\eqref{L.unbound},~Assumption~\ref{Ass:A}~(iii),~\eqref{theta.z},~\eqref{psi}, respectively.

Using the above estimates~\eqref{B2.1},~\eqref{B3.1} in~\eqref{B1.1} we obtain,
\begin{align*}
&\norm{\ip\sbrac{A(\fz)-A(\fr)}}_{\LL}^2+\frac{\la}{2}\norm{L(\fz-\fr)}_{\HH}^2  \\
\leq &2C'^2\la R^{\frac{2p+2}{p+q}}\norm{\ip\sbrac{A(\fz)-A(\fr)}}_{\LL}^{\frac{2s(q-1)}{p+q}}+4 C'^2\la R^{\frac{2p+2}{p+q}}\dra^{\frac{2s(q-1)}{(p+q)}}\\
&+2\norm{\ip\sbrac{A(\fr)-A(\fp)}}_{\LL}\norm{\ip\sbrac{A(\fz)-A(\fr)}}_{\LL}\\
&+(\ell_1\Psi_\xx+2\Theta_{\zz})\norm{\ip\sbrac{A(\fz)-A(\fr)}}_{\LL}\\
&+\ell\sqrt{\la}(\ell_1\Psi_\xx+2\Theta_{\zz})\norm{L(\fz-\fr)}_{\HH}.
\end{align*}

Now, using the inequality~$ab\leq a^2+b^2$ we get,
\begin{align*}
&\norm{\ip\sbrac{A(\fz)-A(\fr)}}_{\LL}^2+\frac{\la}{2}\norm{L(\fz-\fr)}_{\HH}^2  \\
\leq &2C'^2\la R^{\frac{2p+2}{p+q}}\norm{\ip\sbrac{A(\fz)-A(\fr)}}_{\LL}^{\frac{2s(q-1)}{p+q}}+4 C'^2\la R^{\frac{2p+2}{p+q}}\dra^{\frac{2s(q-1)}{(p+q)}}\\
&+4\norm{\ip\sbrac{A(\fr)-A(\fp)}}_{\LL}^2+\frac{1}{4}\norm{\ip\sbrac{A(\fz)-A(\fr)}}_{\LL}^2\\
&+(\ell_1\Psi_\xx+2\Theta_{\zz})^2+\frac{1}{4}\norm{\ip\sbrac{A(\fz)-A(\fr)}}_{\LL}^2\\
&+\ell^2(\ell_1\Psi_\xx+2\Theta_{\zz})^2+\frac{\la}{4}\norm{L(\fz-\fr)}_{\HH}.
\end{align*}
which implies
\begin{align*}
&\frac{1}{2}\norm{\ip\sbrac{A(\fz)-A(\fr)}}_{\LL}^2+\frac{\la}{4}\norm{L(\fz-\fr)}_{\HH}^2  \\
\leq &2C'^2\la R^{\frac{2p+2}{p+q}}\norm{\ip\sbrac{A(\fz)-A(\fr)}}_{\LL}^{\frac{2s(q-1)}{p+q}}+4 C'^2\la R^{\frac{2p+2}{p+q}}\dra^{\frac{2s(q-1)}{(p+q)}}\\
&+4\dra^2+(\ell^2+1)(\ell_1\Psi_\xx+2\Theta_{\zz})^2.
\end{align*}
Now by rearranging the terms we obtain,
\begin{align*}
&\norm{\ip\sbrac{A(\fz)-A(\fr)}}_{\LL}^2+\la\norm{L(\fz-\fr)}_{\HH}^2  \\
\leq &\paren{8C'^2\la R^{\frac{2p+2}{p+q}}\norm{\ip\sbrac{A(\fz)-A(\fr)}}_{\LL}^{\frac{2s(q-1)}{p+q}}-\norm{\ip\sbrac{A(\fz)-A(\fr)}}_{\LL}^2}\\
&+16 C'^2\la R^{\frac{2p+2}{p+q}}\dra^{\frac{2s(q-1)}{(p+q)}}+16\dra^2+4(\ell^2+1)(\ell_1\Psi_\xx+2\Theta_{\zz})^2\\
\leq & \sup_{\tau\geq 0}\paren{8C'^2\la R^{\frac{2p+2}{p+q}}\tau^{\frac{2s(q-1)}{p+q}}-\tau^2}+16 C'^2\la R^{\frac{2p+2}{p+q}}\dra^{\frac{2s(q-1)}{(p+q)}}\\
&+16\dra^2+4(\ell^2+1)(\ell_1\Psi_\xx+2\Theta_{\zz})^2 \\
= & C''^2R^{\frac{2p+2}{(p+q)-s(q-1)}}\la^{\frac{p+q}{(p+q)-s(q-1)}}+16 C'^2\la R^{\frac{2p+2}{p+q}}\dra^{\frac{2s(q-1)}{(p+q)}}\\
&+16\dra^2+4(\ell^2+1)(\ell_1\Psi_\xx+2\Theta_{\zz})^2,
\end{align*}
where~$C''=\paren{\frac{C'^2 8s(q-1)}{p+q}}^{\frac{(p+q)}{2(p+q)-2s(q-1)}}\paren{\frac{(p+q)-s(q-1)}{s(q-1)}}^{1/2}$.

Hence we get,
\begin{align}\label{err_1.1}
&\norm{\ip\sbrac{A(\fz)-A(\fr)}}_{\LL}\\  \nonumber
\leq &2(\ell+1)(\ell_1\Psi_\xx+2\Theta_{\zz})+C''R^{\frac{p+1}{(p+q)-s(q-1)}}\la^{\frac{p+q}{2(p+q)-2s(q-1)}}+4 C'\sqrt{\la}R^{\frac{p+1}{p+q}}\dra^{\frac{s(q-1)}{(p+q)}}+4\dra
\end{align}
and
\begin{align}\label{p_s.1}
&\norm{L(\fz-\fr)}_{\HH}\\   \nonumber
\leq &\frac{1}{\sqrt{\la}}\brac{2(\ell+1)(\ell_1\Psi_\xx+2\Theta_{\zz})+C''R^{\frac{p+1}{(p+q)-s(q-1)}}\la^{\frac{p+q}{2(p+q)-2s(q-1)}}+4 C'\sqrt{\la}R^{\frac{p+1}{p+q}}\dra^{\frac{s(q-1)}{(p+q)}}+4\dra}.
\end{align}

In case~$\dra=0$, for some fixed~$\bar{R}$, we get explicit bounds from~\eqref{err_1.1} and~\eqref{p_s.1} in terms of~$m$ and~$\la$ using~\eqref{Theta.bound} and~\eqref{Psi.bound}.

For~$\dra\neq 0$ and~$\la>0$, we optimize the bounds by balancing the terms in~$R$ and~$\la$. Let~$R = R(\lambda)$ solves the equation~$\Gamma(R) := \dra R^{-\frac{p+1}{(p+q)-s(q-1)}}= \la^{\frac{p+q}{2(p+q)-2s(q-1)}}$. The function~$\Gamma(R)$ is a non-vanishing decreasing function, and hence the inverse~$\Gamma^{-1}$ exists, and it is decreasing. With this, the error bounds can be expressed as
\begin{align}\label{err_1.2}
&\norm{\ip\sbrac{A(\fz)-A(\fr)}}_{\LL}\\  \nonumber
\leq &2(\ell+1)(\ell_1\Psi_\xx+2\Theta_{\zz})+C'''R(\la)^{\frac{p+1}{(p+q)-s(q-1)}}\la^{\frac{p+q}{2(p+q)-2s(q-1)}}
\end{align}
and
\begin{align}\label{p_s.2}
&\norm{L(\fz-\fr)}_{\HH}\\   \nonumber
\leq &\frac{1}{\sqrt{\la}}\brac{2(\ell+1)(\ell_1\Psi_\xx+2\Theta_{\zz})+C'''R(\la)^{\frac{p+1}{(p+q)-s(q-1)}}\la^{\frac{p+q}{2(p+q)-2s(q-1)}}}.
\end{align}
where~$C'''=C''+4 C'+4$.

Now, using~\eqref{Theta.bound} and~\eqref{Psi.bound} in~\eqref{err_1.1},~\eqref{p_s.1} we obtain with probability~$1-\eta$,
\begin{equation}\label{err1.3}
\norm{\ip\sbrac{A(\fz)-A(\fr)}}_{\LL}\leq \widetilde{C}\brac{\sqrt{\frac{\mathcal{N}(\la)}{m}}+R(\la)^{\frac{p+1}{(p+q)-s(q-1)}}\la^{\frac{p+q}{2(p+q)-2s(q-1)}}}\log\paren{\frac{4}{\eta}}
\end{equation}
and
\begin{equation}\label{ps.3}
\norm{L(\fz-\fr)}_{\HH}\leq \frac{\widetilde{C}}{\sqrt{\la}}\brac{\sqrt{\frac{\mathcal{N}(\la)}{m}}+R(\la)^{\frac{p+1}{(p+q)-s(q-1)}}\la^{\frac{p+q}{2(p+q)-2s(q-1)}}}\log\paren{\frac{4}{\eta}},
\end{equation}
where~$\widetilde{C}$ depends on~$\ell$,~$\ell_1$,~$p$,~$q$,~$s$,~$\kappa$,~$M$,~$\Sigma$,~$\al$.

Taking the mean using the inequality~\eqref{interpolation} we get,
\begin{align*}
&\norm{\fz-\fr}_{\HH}\\
\leq & \norm{L(\fz-\fr)}^{\frac{p}{p+1}}_{\HH}\norm{\fz-\fr}^{\frac{1}{p+1}}_{\HH_{-p}}\\
\leq & \al^{\frac{1}{p+1}}\norm{L(\fz-\fr)}_{\HH}^{\frac{p}{p+1}}\norm{\ip\sbrac{A(\fz)-A(\fr)}}_{\LL}^{\frac{s}{p+1}}\\
\leq & \widetilde{C}\la^{-\frac{p}{2(p+1)}}\brac{\sqrt{\frac{\mathcal{N}(\la)}{m}}+R(\la)^{\frac{p+1}{(p+q)-s(q-1)}}\la^{\frac{p+q}{2(p+q)-2s(q-1)}}}^{\frac{(p+s)}{(p+1)}}\log\paren{\frac{4}{\eta}}.
\end{align*}

Hence the proof completes.
\end{proof}

\section{Explicit rates under source condition}\label{Sec:Explicit.rates}

Here, we consider the smoothness of~$\fp$ by the source condition in terms of the operator~$L^{-1}$ to get the explicit rates in terms of~$m$ and~$\la$. The smoothness parameter~$r$ influences the rates of convergence, the larger~$r$ (Smoother~$\fp$) will lead to the faster convergence rates. 

%But the rates will not be improved beyond~$q=2+p$ due to the well-known saturation effect of the Tikhonov regularization. 
%Here, we consider that the true solution have smoothness in terms of the operator~$L^{-1}$. 

\begin{assumption}[General source condition]\label{source.cond}
The true solution~$\fp$ satisfy the condition:
  \begin{equation*}
    \fp-\fbar= L^{-r}v \text{ and }\norm{v}_{\HH} \leq R^\dagger.
  \end{equation*}
\end{assumption}
%The index function~$\theta$ is positive, continuous, and increasing with~$\theta(0)=0$. In particular,~$\theta$ can be power-type~$\theta(t)=t^r$ in H{\"o}lder source condition or logarithm-type~$\theta(t)=t^p\log^{-\nu}\paren{\frac{1}{t}}$ with~$p\in\NN$,~$\nu\in [0, 1]$. 

The rates in Theorem~\ref{err.upper.bound.p.1} can be further simplified in two cases based on the behaviour of the distance function~$\dra$. 

%\subsection{Distance function~$\dra=0$}
In case~$\dra=0$, we get the explicit error bounds in terms of~$\la$ and~$m$ from Theorem~\ref{err.upper.bound.p.1}. We get the function~$d_A(\bar{R})=0$ when~$\fp-\fbar=L^{-q}v$ and~$\norm{v}\leq \bar{R}$ for some~$\bar{R}$, i.e.,~$r \geq q$. Consequently, this also implies~$\fp^{\bar{R}}=\fp$. So, the rates of convergence in the reconstruction norm and prediction norm can be given as:   
\begin{align*}
\PP \Big\{\norm{\ip\sbrac{A(\fz)-A(\fp)}}_{\mathcal{L}^2} \leq \widetilde{C}\la^{\frac{1}{2}}&\paren{\sqrt{\frac{\mathcal{N}(\la)}{m\la}}+\bar{R}^{\frac{p+1}{(p+q)-s(q-1)}}\la^{\frac{s(q-1)}{2(p+q)-2s(q-1)}}}\log\paren{\frac{4}{\eta}}\Big\}\geq 1-\eta,\\
\PP \Big\{\norm{\fz-\fp}_{\HH}\leq \widetilde{C}\la^{\frac{s}{2(p+1)}}&\paren{\sqrt{\frac{\mathcal{N}(\la)}{m\la}}+\bar{R}^{\frac{p+1}{(p+q)-s(q-1)}}\la^{\frac{s(q-1)}{2(p+q)-2s(q-1)}}}^{\frac{(p+s)}{(p+1)}}\log\paren{\frac{4}{\eta}}\Big\}\geq 1-\eta.
\end{align*}

By balancing the error terms, we choose the regularization parameter~$\la$ in terms of the sample size~$m$. Consequently, we get the explicit rates of convergence in terms of the sample size. 
\begin{corollary}\label{cor.err.upper.bound.gen}
Under the same assumptions of Theorem~\ref{err.upper.bound.p.1} and Assumption~\ref{source.cond} with~$r\geq q$ and the a-priori choice of the regularization parameter~$\la^*=\Theta_{\mathcal{N},u}^{-1}\paren{\frac{1}{\sqrt{m}}}$ for~$\Theta_{\mathcal{N},u}(t)=\frac{t^u}{\sqrt{\mathcal{N}(t)}}$ and~$u=\frac{p+q}{2(p+q)-2s(q-1)}$, for all~$0<\eta<1$, the following error estimates holds with confidence~$1-\eta$:
\begin{align*}
\norm{\ip\sbrac{A(\fz)-A(\fp)}}_{\mathcal{L}} \leq \overline{C}\paren{\la^*}^u\log\paren{\frac{4}{\eta}}
\end{align*}
and
\begin{align*}
\norm{\fz-\fp}_{\HH}\leq &  \overline{C}\paren{\la^*}^{\frac{2u(s+p)-p}{2(p+1)}}\log\paren{\frac{4}{\eta}}
=   \overline{C}\paren{\la^*}^{\frac{sq}{2(p+q)-2s(q-1)}}\log\paren{\frac{4}{\eta}}.
\end{align*}
where~$\overline{C}$ depends on~$\ell$,~$\ell_1$,~$p$,~$q$,~$s$,~$\kappa$,~$M$,~$\Sigma$,~$\al$,~$\bar{R}$.
\end{corollary}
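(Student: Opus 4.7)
The plan is to reduce the corollary to an application of Theorem~\ref{err.upper.bound.p.1} in the ``distance~$=0$'' regime, followed by a balancing of the two terms inside the braces.

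First I would verify that, under Assumption~\ref{source.cond} with $r\geq q$, the distance function vanishes for a suitable radius. Since $\fp-\fbar = L^{-r}v = L^{-q}\bigl(L^{-(r-q)}v\bigr)$ and $L^{-1}$ is bounded with $\norm{L^{-1}}\leq 1/\elll$ by~\eqref{L.unbound}, the element $w:=L^{-(r-q)}v$ satisfies $\fp-\fbar = L^{-q}w$ and $\norm{w}_{\HH}\leq \elll^{-(r-q)} R^\dagger =: \bar R$. Hence $\dr=0$ and the minimizer is $\fr=\fp$, which in turn gives $\dra=\norm{\ip[A(\fp)-A(\fp)]}_{\LL}=0$. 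We are therefore in the ``$\dra=0$'' branch of Theorem~\ref{err.upper.bound.p.1}, with $R$ replaced by the fixed constant $\bar R$ and $\fr=\fp$ throughout.

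Next I would choose $\lambda^*$ to equate, up to constants, the two terms inside the braces of the theorem's bounds. Writing $u=\frac{p+q}{2(p+q)-2s(q-1)}$ one checks by direct simplification that the exponent of $\la$ in the second summand is $\frac{s(q-1)}{2(p+q)-2s(q-1)}=u-\tfrac12$. The parameter choice $\lambda^*=\Theta_{\mathcal{N},u}^{-1}(1/\sqrt m)$, i.e.\ $\lambda^{*\,u}=\sqrt{\mathcal{N}(\lambda^*)/m}$, then enforces
\[
\sqrt{\tfrac{\mathcal{N}(\lambda^*)}{m\lambda^*}}=\lambda^{*\,u-1/2}\quad\text{and}\quad \bar R^{\frac{p+1}{(p+q)-s(q-1)}}\lambda^{*\,u-1/2}=O(\lambda^{*\,u-1/2}),
\]
so both summands are of order $\lambda^{*\,u-1/2}$.

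Feeding this balance into the two displayed inequalities of Theorem~\ref{err.upper.bound.p.1} (with $\fr=\fp$), the prediction-type bound becomes $\lambda^{*\,1/2}\cdot \lambda^{*\,u-1/2}=\lambda^{*\,u}$, which yields the first claim. For the reconstruction-norm bound one gets a prefactor $\lambda^{*\,s/(2(p+1))}\cdot\lambda^{*\,(u-1/2)(p+s)/(p+1)}$ with combined exponent
\[
\frac{s}{2(p+1)}+\frac{(2u-1)(p+s)}{2(p+1)}=\frac{2u(p+s)-p}{2(p+1)}.
\]
The only remaining task is the algebraic identity $\frac{2u(p+s)-p}{2(p+1)}=\frac{sq}{2(p+q)-2s(q-1)}$, which follows from a short expansion: substituting $u=(p+q)/D$ with $D=2(p+q)-2s(q-1)$ one obtains $2u(p+s)-p=2sq(p+1)/D$, and dividing by $2(p+1)$ gives exactly $sq/D$. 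This matches the exponent in the statement.

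The only genuine obstacle is bookkeeping: making sure that $\fz,\fr=\fp\in Q$ (needed to invoke the conditional stability estimate) for sufficiently large $m$, which is inherited from the hypotheses of Theorem~\ref{err.upper.bound.p.1}, and collecting all multiplicative constants (including $\bar R^{(p+1)/((p+q)-s(q-1))}$ and $\widetilde C$) into the single constant $\overline C$. No new probabilistic ingredient is required; the confidence $1-\eta$ comes directly from that of Theorem~\ref{err.upper.bound.p.1}.
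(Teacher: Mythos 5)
Your proposal is correct and follows essentially the same route as the paper: identify that Assumption~\ref{source.cond} with $r\geq q$ forces $d_A(\bar R)=0$ and $\fp^{\bar R}=\fp$ (so Theorem~\ref{err.upper.bound.p.1} applies with a fixed $\bar R$ and $\fr=\fp$), then balance the two summands via $\la^{*\,u}=\sqrt{\mathcal{N}(\la^*)/m}$ and simplify the exponents. Your exponent computations, including the identity $\frac{2u(p+s)-p}{2(p+1)}=\frac{sq}{2(p+q)-2s(q-1)}$, check out, and you actually supply more detail (e.g.\ the factorization $L^{-r}v=L^{-q}L^{-(r-q)}v$ with $\norm{L^{-(r-q)}}\leq \elll^{-(r-q)}$) than the paper does.
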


In case~$\dra\neq 0$, we have to estimate the function~$\dra$ explicitly. We utilize the result of~\cite[Theorem~5.9]{Hofmann2007} to estimate the distance function using the source condition. For the benchmark smoothness~$q$ and the given smoothness~$r$, we assume that~$q\geq r$ and~$2q\geq p+r$. Then, under Assumption~\ref{source.cond}, we get the bound
$$
d(R) \leq \frac{\paren{R^\dagger}^{\frac{q}{q-r}}}{R^{\frac{r}{q-r}}},\quad R>0.
$$

Following the analysis in~\cite[Theorem~5.9]{Hofmann2007} we also obtain the bounds for the distance function:
\begin{equation}\label{R.choice}
  \drp:=\norm{\fr-\fp}_{\HH_{-p}}  \leq \frac{\paren{R^\dagger}^{\frac{q+p}{q-r}}}{R^{\frac{r+p}{q-r}}},\quad R>0.
\end{equation}

To bound the distance function~$\dra$, we assume the following assumption in addition to Assumption~\ref{Ass:A}~(iv) with the same parameters:
\begin{assumption}\label{Cond.est}
\item There exists a constant~$\beta>0$ such that
\begin{equation*}
 \al\norm{\ip\sbrac{A(f)-A(\fp)}}_{\LL}^s \leq \beta\norm{f-\fp}_{\HH_{-p}}
\end{equation*}
holds for all~$f \in Q$.
\end{assumption}

Now, according to Theorem~\ref{err.upper.bound.p.1} we have to solve the following equation in order to estimate~$R$ in terms of~$\la$.  
$$\dra R^{-\frac{p+1}{(p+q)-s(q-1)}}= \la^{\frac{p+q}{2(p+q)-2s(q-1)}}.$$

Here, we get the estimate of~$\dra$ from Assumption~\ref{Cond.est} and the bound~\eqref{R.choice}. By ignoring the multiplicative constant in Assumption~\ref{Cond.est} we get the following identity from the above equation:
$$\frac{\paren{R^\dagger}^{\frac{q+p}{s(q-r)}}}{R^{\frac{r+p}{s(q-r)}}} R^{-\frac{p+1}{(p+q)-s(q-1)}}= \la^{\frac{p+q}{2(p+q)-2s(q-1)}}.$$ 

This yields
$$
R(\la) = \paren{R^{\dag}}^{\frac{(p+q)-s(q-1)}{(p+r)-s(r-1)}} \la^{\frac{s(r-q)}{2(p+r)-2s(r-1)}},\quad R>0.
$$

We get the explicit error bound from Theorem~\ref{err.upper.bound.p.1} in terms of the sample size~$m$ and~$\la$ using the above dependency~$\la \to R(\la)$.

\begin{align*}
\PP \Big\{\norm{\ip\sbrac{A(\fz)-A(\fr)}}_{\mathcal{L}^2} \leq \widetilde{C}\la^{\frac{1}{2}}&\paren{\sqrt{\frac{\mathcal{N}(\la)}{m\la}}+\paren{R^\dagger}^{\frac{p+1}{(p+r)-s(r-1)}}\la^{\frac{s(r-1)}{2(p+r)-2s(r-1)}}}\log\paren{\frac{4}{\eta}}\Big\}\geq 1-\eta,\\
\PP \Big\{\norm{\fz-\fr}_{\HH}\leq \widetilde{C}\la^{\frac{s}{2(p+1)}}&\paren{\sqrt{\frac{\mathcal{N}(\la)}{m\la}}+\paren{R^\dagger}^{\frac{p+1}{(p+r)-s(r-1)}}\la^{\frac{s(r-1)}{2(p+r)-2s(r-1)}}}^{\frac{(p+s)}{(p+1)}}\log\paren{\frac{4}{\eta}}\Big\}\geq 1-\eta.
\end{align*}

Now, we get the following error estimates using the identity~$\fz-\fp=(\fz-\fr)+(\fr-\fp)$ and the estimates of distance functions in it.

\begin{align*}
\PP \Big\{\norm{\ip\sbrac{A(\fz)-A(\fp)}}_{\mathcal{L}^2} \leq \widetilde{C}\la^{\frac{1}{2}}&\paren{\sqrt{\frac{\mathcal{N}(\la)}{m\la}}+\paren{R^\dagger}^{\frac{p+1}{(p+r)-s(r-1)}}\la^{\frac{s(r-1)}{2(p+r)-2s(r-1)}}}\log\paren{\frac{4}{\eta}}\Big\}\geq 1-\eta,\\
\PP \Big\{\norm{\fz-\fp}_{\HH}\leq \widetilde{C}\la^{\frac{s}{2(p+1)}}&\paren{\sqrt{\frac{\mathcal{N}(\la)}{m\la}}+\paren{R^\dagger}^{\frac{p+1}{(p+r)-s(r-1)}}\la^{\frac{s(r-1)}{2(p+r)-2s(r-1)}}}^{\frac{(p+s)}{(p+1)}}\log\paren{\frac{4}{\eta}}\Big\}\geq 1-\eta.
\end{align*}

By balancing the error terms, we choose the regularization parameter~$\la$ in terms of the sample size~$m$. Consequently, we get the explicit rates of convergence in terms of the sample size. 
\begin{corollary}\label{cor.err.upper.bound.gen.l}
Under the same assumptions of Theorem~\ref{err.upper.bound.p.1} and Assumption~\ref{source.cond} with~$r\leq q$,~$r+p\leq 2q$ and the a-priori choice of the regularization parameter~$\la^*=\Theta_{\mathcal{N},u}^{-1}\paren{\frac{1}{\sqrt{m}}}$ for~$\Theta_{\mathcal{N},u}(t)=\frac{t^u}{\sqrt{\mathcal{N}(t)}}$ and~$u=\frac{p+r}{2(p+r)-2s(r-1)}$, for all~$0<\eta<1$, the following error estimates holds with confidence~$1-\eta$:
\begin{align*}
\norm{\ip\sbrac{A(\fz)-A(\fp)}}_{\mathcal{L}} \leq \overline{C}\paren{\la^*}^u\log\paren{\frac{4}{\eta}}
\end{align*}
and
\begin{align*}
\norm{\fz-\fp}_{\HH}\leq &  \overline{C}\paren{\la^*}^{\frac{2u(s+p)-p}{2(p+1)}}\log\paren{\frac{4}{\eta}}
=   \overline{C}\paren{\la^*}^{\frac{sr}{2(p+r)-2s(r-1)}}\log\paren{\frac{4}{\eta}}.
\end{align*}
where~$\overline{C}$ depends on~$\ell$,~$\ell_1$,~$p$,~$q$,~$s$,~$\kappa$,~$M$,~$\Sigma$,~$\al$,~$R^\dagger$.
\end{corollary}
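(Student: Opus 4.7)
The plan is to start from the high-probability estimates already derived in the discussion immediately preceding the corollary, namely
\begin{equation*}
\norm{\ip[A(\fz)-A(\fp)]}_{\LL} \leq \widetilde{C}\,\lambda^{1/2}\!\left(\sqrt{\tfrac{\mathcal{N}(\lambda)}{m\lambda}}+(R^\dagger)^{\frac{p+1}{(p+r)-s(r-1)}}\lambda^{\frac{s(r-1)}{2(p+r)-2s(r-1)}}\right)\log\!\tfrac{4}{\eta}
\end{equation*}
and the analogous bound on $\norm{\fz-\fp}_{\HH}$ with the extra prefactor $\lambda^{s/(2(p+1))}$ and the exponent $(p+s)/(p+1)$ on the bracket. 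These were obtained by plugging the distance-function estimate $d^p(R) \leq (R^\dagger)^{(p+q)/(q-r)}/R^{(p+r)/(q-r)}$ (which requires $q\geq r$ and $2q\geq p+r$) together with Assumption~\ref{Cond.est} into the implicit balancing equation $d_A(R)\,R^{-(p+1)/((p+q)-s(q-1))}=\lambda^{(p+q)/(2(p+q)-2s(q-1))}$ that defines $R(\lambda)$ in Theorem~\ref{err.upper.bound.p.1}. So the remaining work is the deterministic parameter-balancing exercise.

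First, I would simplify the prediction bound. Writing $u=\tfrac{p+r}{2(p+r)-2s(r-1)}$, direct addition of exponents gives $\lambda^{1/2}\cdot\lambda^{s(r-1)/(2(p+r)-2s(r-1))}=\lambda^{u}$, and the first inner term becomes $\lambda^{1/2}\sqrt{\mathcal{N}(\lambda)/(m\lambda)}=\sqrt{\mathcal{N}(\lambda)/m}$. Thus the prediction bound takes the form
\begin{equation*}
\norm{\ip[A(\fz)-A(\fp)]}_{\LL}\leq\widetilde{C}\!\left(\sqrt{\tfrac{\mathcal{N}(\lambda)}{m}}+(R^\dagger)^{\frac{p+1}{(p+r)-s(r-1)}}\lambda^{u}\right)\log\!\tfrac{4}{\eta}.
\end{equation*}
The choice $\lambda^*=\Theta_{\mathcal{N},u}^{-1}(1/\sqrt{m})$ is precisely the one that balances these two summands, since $\Theta_{\mathcal{N},u}(\lambda)=\lambda^{u}/\sqrt{\mathcal{N}(\lambda)}=1/\sqrt m$ is equivalent to $\sqrt{\mathcal{N}(\lambda)/m}=\lambda^{u}$. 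Substituting yields the first claim $\norm{\ip[A(\fz)-A(\fp)]}_{\LL}\leq\overline{C}(\lambda^*)^{u}\log(4/\eta)$.

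For the reconstruction bound I would use the same balancing: at $\lambda=\lambda^*$ the bracket evaluates to $C(\lambda^*)^{u-1/2}$ (both summands are of this order), and the exponent in front is $\lambda^{s/(2(p+1))}$. Thus the rate exponent of $\lambda^*$ equals
\begin{equation*}
\frac{s}{2(p+1)}+\frac{(2u-1)(p+s)}{2(p+1)}=\frac{2u(p+s)-p}{2(p+1)}.
\end{equation*}
The main small obstacle is verifying that this simplifies to $sr/(2(p+r)-2s(r-1))$, but this is just arithmetic: with $D:=2(p+r)-2s(r-1)$ and $u=(p+r)/D$ one gets $2u(p+s)-p=(2(p+r)(p+s)-pD)/D=2s(p+1)r/D$, so dividing by $2(p+1)$ produces exactly $sr/D$ as claimed.

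The final step is to check that the auxiliary hypotheses in Theorem~\ref{err.upper.bound.p.1} and the Hofmann-type estimate are compatible, namely that $1\leq q\leq 2+p$, $q(s-1)\leq p+s$, together with the corollary's standing assumptions $r\leq q$ and $r+p\leq 2q$. Collecting the constants $\ell,\ell_1,p,q,s,\kappa,M,\Sigma,\alpha,R^\dagger$ into a single $\overline{C}$ then gives both announced estimates with confidence $1-\eta$. I do not expect any genuine difficulty here — the substance of the argument is entirely contained in Theorem~\ref{err.upper.bound.p.1} and the distance-function estimate from~\cite{Hofmann2007}; the corollary is essentially the optimization of $\lambda$ made explicit.
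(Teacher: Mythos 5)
Your proposal is correct and follows essentially the same route as the paper: the corollary is obtained by inserting the Hofmann-type distance-function bound and Assumption~\ref{Cond.est} into the balancing equation for $R(\la)$, reading off the resulting high-probability estimates displayed just before the corollary, and then choosing $\la^*$ so that $\sqrt{\mathcal{N}(\la)/m}=\la^{u}$; your exponent arithmetic ($2u(p+s)-p=2sr(p+1)/D$, hence the rate $sr/D$) checks out. No gaps.
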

%We observe that the above parameter choice evidently satisfies condition~\eqref{l.la.condition}.

The effective dimension exhibits different behaviour under the different choices kernel and unknown probability measures~\cite{Lu2020}. We consider the following decay conditions on it.   

\begin{assumption}[Polynomial decay condition]\label{N(l).bound}
Assume that for some~$0<b<1$ there exists some positive constant~$C>0$ such that
\begin{equation*}
\mathcal{N}(\la):=Tr\left((\tp+\la I)^{-1}\tp\right) \leq C\la^{-b},\forall \la>0.
\end{equation*}
\end{assumption}

\begin{assumption}[Logarithmic decay condition]\label{log.decay} 
Assume that there exists some positive constant~$C>0$ such that
\begin{equation*}
\mathcal{N}(\la)\leq C\log\left(\frac{1}{\la}\right),\forall \la>0.
\end{equation*}
\end{assumption}

%In general, from Assumption~\ref{Ass:kernel}, we have
%$$\mathcal{N}(\la)\leq \norm{(\tp+\la I)^{-1}}_{\mathcal{L}(\HH)}Tr\left(\tp\right) \leq \frac{\kappa^2}{\la}.$$

%Hence under the polynomial decay condition, we get that the effective dimension of~$\HH$ behaves like power-type function. However, in general, we may not expect power-type behavior of the effective dimension. Lu et al.~\cite{Lu2020} have shown that for Gaussian kernel with the uniform sampling on~$[0,1]$, the effective dimension exhibits the log-type behavior (Assumption~\ref{log.decay}) rather than the power-type behavior (Assumption~\ref{N(l).bound}). Therefore we also consider the logarithm decay condition on the effective dimension~$\mathcal{N}(\la)$ in our analysis.

\begin{corollary}\label{err.upper.bound.p.para}
Under the same assumptions of Theorem~\ref{err.upper.bound.p.1} and Assumption~\ref{source.cond},~\ref{Cond.est},~\ref{N(l).bound} with the a-priori choice of the regularization parameter~$\la^*=m^{-\frac{1}{2u+b}}$, for all~$0<\eta<1$, the following error estimates hold with confidence~$1-\eta$:
\begin{align*}
\norm{\fz-\fp}_{\HH}\leq &\widetilde{C}\paren{\la^*}^{\frac{2u(s+p)-p}{2(p+1)}}\log\paren{\frac{4}{\eta}},\qquad u=\frac{p+q}{2(p+q)-2s(q-1)} \quad \text{for} \quad r\geq q.\\
\norm{\fz-\fp}_{\HH}\leq &\overline{C}\paren{\la^*}^{\frac{2u(s+p)-p}{2(p+1)}}\log\paren{\frac{4}{\eta}},\qquad u=\frac{p+r}{2(p+r)-2s(r-1)}\quad \text{for} \quad r\leq q,~r+p\leq 2q.
\end{align*}
\end{corollary}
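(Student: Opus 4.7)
The proof amounts to specializing Corollaries~\ref{cor.err.upper.bound.gen} and~\ref{cor.err.upper.bound.gen.l} under the polynomial effective-dimension decay of Assumption~\ref{N(l).bound}, and verifying that the implicit parameter choice $\la^* = \Theta_{\mathcal{N},u}^{-1}(1/\sqrt{m})$ collapses to the explicit one $\la^* = m^{-1/(2u+b)}$. In other words, no new analytic work is required beyond a calibration computation.

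First I would compute $\Theta_{\mathcal{N},u}$ under Assumption~\ref{N(l).bound}. Since $\mathcal{N}(t) \leq C t^{-b}$, one has $\Theta_{\mathcal{N},u}(t) = t^u/\sqrt{\mathcal{N}(t)} \geq C^{-1/2} t^{u+b/2}$, which is strictly increasing in $t$. Equating this quantity to $1/\sqrt{m}$ and solving yields $\la^* = C^{1/(2u+b)}\,m^{-1/(2u+b)}$, which agrees with the stated choice up to a multiplicative constant that can be absorbed into $\widetilde{C}$ or $\overline{C}$.

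Next I would split into the two smoothness regimes covered by the statement. For $r \geq q$, the distance function vanishes and Corollary~\ref{cor.err.upper.bound.gen} applies verbatim with $u = \frac{p+q}{2(p+q)-2s(q-1)}$. For $r \leq q$ and $r+p \leq 2q$, Assumption~\ref{Cond.est} combined with the distance function bound~\eqref{R.choice} puts us inside the hypotheses of Corollary~\ref{cor.err.upper.bound.gen.l} with $u = \frac{p+r}{2(p+r)-2s(r-1)}$. In each case the reconstruction error is already of the form $\overline{C}\,(\la^*)^{(2u(s+p)-p)/(2(p+1))}\log(4/\eta)$ with probability $1-\eta$; substituting the explicit value of $\la^*$ derived in the previous step produces the two rates advertised.

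The only delicate point I expect is an admissibility check: the standing condition~\eqref{l.la.condition} requires $\mathcal{N}(\la^*) \leq m\la^*$ and $\la^* \leq 1$. Under the polynomial decay and the above choice, $\mathcal{N}(\la^*)/(m\la^*) \leq C\,m^{(1-2u)/(2u+b)}$, which is bounded provided $u \geq 1/2$. A direct inspection of the two formulas for $u$ shows $u \geq 1/2$ is equivalent to $s(q-1) \geq 0$ in the first regime and $s(r-1) \geq 0$ in the second, both automatic (given $q \geq 1$, and $r \geq 1$ respectively) under the hypotheses of Theorem~\ref{err.upper.bound.p.1}. Beyond this sanity check, the remainder is mechanical substitution and I anticipate no further obstacle.
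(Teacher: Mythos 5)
Your overall route is exactly the one the paper intends: the paper gives no separate proof of Corollary~\ref{err.upper.bound.p.para}, treating it as immediate from Corollaries~\ref{cor.err.upper.bound.gen} and~\ref{cor.err.upper.bound.gen.l} once the polynomial decay $\mathcal{N}(t)\leq Ct^{-b}$ is used to turn the implicit calibration $\Theta_{\mathcal{N},u}(\la^*)=1/\sqrt{m}$ into the explicit choice $\la^*=m^{-1/(2u+b)}$. Your computation that $\sqrt{\mathcal{N}(\la^*)/m}\leq \sqrt{C}\,(\la^*)^{u}$ under this choice, so that both terms in the bound of Theorem~\ref{err.upper.bound.p.1} balance at order $(\la^*)^u$, is the whole content of the corollary, and your identification of the two regimes (distance function vanishing for $r\geq q$; Assumption~\ref{Cond.est} plus the bound~\eqref{R.choice} giving $R(\la)$ for $r\leq q$, $r+p\leq 2q$) matches the paper.

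The one concrete flaw is in your admissibility check. You correctly reduce the standing condition~\eqref{l.la.condition} to $u\geq 1/2$, i.e.\ $s(q-1)\geq 0$ resp.\ $s(r-1)\geq 0$, but you then assert that $r\geq 1$ is ``automatic under the hypotheses of Theorem~\ref{err.upper.bound.p.1}.'' It is not: the theorem constrains $q$ (namely $1\leq q\leq 2+p$), while $r$ is the smoothness index from Assumption~\ref{source.cond} and is not bounded below anywhere; indeed the paper explicitly emphasizes that the analysis is meant to cover the oversmoothing case $r\leq 1$. For $r<1$ one has $u<1/2$ in the second regime, and then $\mathcal{N}(\la^*)/(m\la^*)\leq C\,m^{(1-2u)/(2u+b)}$ grows with $m$, so the sufficient check you perform fails precisely in the regime the corollary is advertised to handle. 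To be fair, this tension is present in the paper itself (which silently assumes~\eqref{l.la.condition} holds for the chosen $\la^*$), so your proof is no worse than the source; but as written your justification of the admissibility step is incorrect, and an honest proof would either restrict to $r\geq 1$ or flag that~\eqref{l.la.condition} becomes an additional constraint on $m$ and $\la$ in the oversmoothing case.
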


\begin{corollary}\label{err.upper.bound.cor.log}
Under the same assumptions of Theorem~\ref{err.upper.bound.p.1} and Assumption~\ref{source.cond},~\ref{Cond.est},~\ref{log.decay} with the a-priori choice of the regularization parameter~$\la^*=\left(\frac{\log m}{m}\right)^{\frac{1}{2r+1}}$, for all~$0<\eta<1$, we have the following convergence rates with confidence~$1-\eta$:
\begin{align*}
\norm{\fz-\fp}_{\HH}\leq &\widetilde{C}\paren{\la^*}^{\frac{2u(s+p)-p}{2(p+1)}}\log\paren{\frac{4}{\eta}},\qquad u=\frac{p+q}{2(p+q)-2s(q-1)} \quad \text{for} \quad r \geq q.\\
\norm{\fz-\fp}_{\HH}\leq &\overline{C}\paren{\la^*}^{\frac{2u(s+p)-p}{2(p+1)}}\log\paren{\frac{4}{\eta}},\qquad u=\frac{p+r}{2(p+r)-2s(r-1)}\quad \text{for} \quad r \leq q,~r+p\leq 2q.
\end{align*}
\end{corollary}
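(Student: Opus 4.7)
The plan is to specialize Corollaries~\ref{cor.err.upper.bound.gen} and~\ref{cor.err.upper.bound.gen.l} to the logarithmic decay regime and then convert the implicit definition of $\lambda^*=\Theta_{\mathcal{N},u}^{-1}(1/\sqrt{m})$ into the explicit form stated in the corollary. Since both earlier corollaries already furnish bounds of the shape
\[
\norm{\fz-\fp}_{\HH}\leq C\,(\lambda^*)^{\frac{2u(s+p)-p}{2(p+1)}}\log\paren{\frac{4}{\eta}}
\]
with $u=\frac{p+q}{2(p+q)-2s(q-1)}$ in the case $r\geq q$ and $u=\frac{p+r}{2(p+r)-2s(r-1)}$ in the case $r\leq q$, $r+p\leq 2q$, the entire task reduces to locating $\lambda^*$ explicitly under Assumption~\ref{log.decay} and substituting this value back into the bound.

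First I would recall that $\lambda^*$ is defined as the solution of $\Theta_{\mathcal{N},u}(\lambda^*)=1/\sqrt m$, i.e.\ $(\lambda^*)^{2u}=\mathcal{N}(\lambda^*)/m$. Under Assumption~\ref{log.decay} this becomes the implicit inequality $(\lambda^*)^{2u}\leq (C/m)\log(1/\lambda^*)$. Next I would plug in the ansatz $\lambda^*=(\log m/m)^{1/(2r+1)}$ and verify by direct computation that it satisfies this inequality: for $m$ sufficiently large one has $\log(1/\lambda^*)=\frac{1}{2r+1}\log(m/\log m)\leq \frac{1}{2r+1}\log m$, so the right-hand side is $\lesssim \log m/m$, while the left-hand side equals $(\log m/m)^{2u/(2r+1)}$; any polynomial slack is absorbed into the constants $\widetilde{C}$ and $\overline{C}$, and the condition~\eqref{l.la.condition} remains valid for $m$ large by the same check. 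Finally I would substitute this $\lambda^*$ into the rate $(\lambda^*)^{\frac{2u(s+p)-p}{2(p+1)}}$ from the two preceding corollaries, obtaining the two stated bounds with the same confidence level $1-\eta$.

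The hard part will be reconciling the single exponent $1/(2r+1)$ in the prescribed $\lambda^*$ with the two different exponents $1/(2u)$ suggested by the implicit equation in the two regimes $r\geq q$ and $r\leq q$: one must confirm that after the log-factor slack is absorbed, the ansatz is admissible in both cases and that the resulting exponent $\frac{2u(s+p)-p}{2(p+1)}$ in the final convergence rate is the genuine tight exponent in each regime. Everything else — propagating confidence through the union bound already used in Theorem~\ref{err.upper.bound.p.1}, handling the multiplicative $\log(1/\lambda^*)$ factor which grows only as $\log m$, and keeping track of the constants depending on $\ell,\ell_1,p,q,s,\kappa,M,\Sigma,\al,R^\dagger$ — is routine bookkeeping once the parameter choice has been validated.
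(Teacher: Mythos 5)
Your overall strategy is the right one (and, since the paper states this corollary without proof, essentially the only sensible one): specialize Corollaries~\ref{cor.err.upper.bound.gen} and~\ref{cor.err.upper.bound.gen.l}, verify that the prescribed $\la^*$ satisfies the balancing relation $(\la^*)^{2u}\asymp\mathcal{N}(\la^*)/m$ under Assumption~\ref{log.decay}, and substitute. The gap is in the step you yourself flag as ``the hard part'' and then dismiss: you claim that the mismatch between the prescribed exponent $\tfrac{1}{2r+1}$ and the balancing exponent $\tfrac{1}{2u}$ leaves only ``polynomial slack absorbed into the constants.'' That is not true. With $\la^*=(\log m/m)^{1/(2r+1)}$ the two competing terms are $\sqrt{\mathcal{N}(\la^*)/m}\asymp(\log m/m)^{1/2}$ and $(\la^*)^{u}=(\log m/m)^{u/(2r+1)}$; unless $2u=2r+1$ these differ by a factor that is polynomial in $m$, and a polynomial-in-$m$ factor cannot be absorbed into a constant. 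In that case the sum is dominated by the larger of the two terms and the conclusion $\norm{\fz-\fp}_{\HH}\lesssim(\la^*)^{\frac{2u(s+p)-p}{2(p+1)}}$ is simply not what your substitution yields (indeed, when $2u>2r+1$ the sample-error term dominates and the claimed bound is not established at all by this argument).

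The condition $2u=2r+1$ does not hold for the values of $u$ appearing in the two regimes (e.g.\ $u=\frac{p+r}{2(p+r)-2s(r-1)}$ forces $2u\le\frac{p+r}{p+1}\le 2$, whereas $2r+1$ is unconstrained by this), so you cannot verify the ansatz as stated; the exponent $\tfrac{1}{2r+1}$ in the corollary is evidently carried over from a different convention (compare Corollary~\ref{err.upper.bound.p.para}, where the polynomial-decay choice $\la^*=m^{-1/(2u+b)}$ is expressed in terms of $u$, not $r$). To make your proof work you must either restrict to the degenerate situation $2u=2r+1$ or replace the parameter choice by $\la^*=(\log m/m)^{1/(2u)}$, for which your verification does go through: $\mathcal{N}(\la^*)\le C\log(1/\la^*)=\tfrac{C}{2u}\log(m/\log m)\le\tfrac{C}{2u}\log m=\tfrac{C}{2u}\,m(\la^*)^{2u}$, so both terms inside the braces of Theorem~\ref{err.upper.bound.p.1} are of order $(\la^*)^{u-1/2}$ up to constants, and condition~\eqref{l.la.condition} holds for large $m$ since $u\ge 1/2$. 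The remaining bookkeeping (union bound, constants, the $q$- versus $r$-dependent value of $u$ in the two regimes) is routine, as you say.
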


Now, we summarize the above results with conditions. We presented the rates of convergence under the different decay conditions on the effective dimension in Corollaries~\ref{err.upper.bound.p.para},~\ref{err.upper.bound.cor.log}. In both the corollaries, first, we discuss the case when the actual smoothness is higher than the benchmark smoothness of the true solution. In this case, we get the rates of convergence corresponding to the benchmark smoothness~$q$ for~$1\leq q\leq \min(r,2+p)$,~$0<s \leq 1$. Although, the actual smoothness is higher. Second, we discuss the case when the actual smoothness is lesser than the benchmark smoothness. Here, we get the error estimates corresponding to the actual smoothness~$r$ for~$\max(1,p,r) \leq q \leq 2+p$,~$0<s\leq 1$. So, the rates are the same as what we would get by directly using the smoothness information of the true solution. At the intersection point, when~$q = r$, then both rates coincide. So, this analysis suggests that if we consider the benchmark smoothness in the appropriate range, then we would get the best rates of convergence. We emphasize that our analysis covers the oversmoothing case, i.e.,\ $r\leq 1$.  

\section*{Acknowledgements}
This research has been partially funded by Deutsche Forschungsgemeinschaft (DFG) under The Berlin Mathematics Research Center MATH+ (EXC-2046/1 - 390685689).\\
The author is grateful for fruitful discussions with Peter Math{\'e} about regularization in Hilbert Scales.

%\section{Discussion}\label{Sec:Discussion}
\appendix

\section{Probabilistic bounds}\label{Sec:prob.est}
Here, we present the standard perturbation bounds measured under the random sampling which can be obtained in~\cite{Rastogi2020}. 

\begin{proposition}\label{main.bound}
Suppose Assumption~\ref{Ass:fp}--\ref{Ass:kernel} hold true, then for~$m \in \NN$ and~$0<\eta<1$, each of the following estimates holds with the confidence~$1-\eta$,
\begin{equation*}
\Theta_{\zz}:=\norm{(\tp +\la I)^{-1/2}\sx^*\bm{\varepsilon}}_{\HH'} \leq 2\paren{\frac{\kappa M}{m\sqrt{\la}}+\sqrt{\frac{\Sigma^2\mathcal{N}(\la)}{m}}}\log\left(\frac{2}{\eta}\right),
\end{equation*}

%\begin{equation*}
%\norm{\sx^*\bm{\varepsilon}}_{\HH'} \leq 2\paren{\frac{\kappa M}{m}+\frac{\kappa\Sigma}{\sqrt{m}}}\log\left(\frac{2}{\eta}\right)
%\end{equation*}
and
\begin{equation*}
\Psi_\xx: =\norm{(\tp+\la I)^{-1/2}(\tx-\tp)}_{\mathcal{L}_2(\HH')}\leq 2\left(\frac{\kappa^2}{m\sqrt{\la}}+\sqrt{\frac{\kappa^2\mathcal{N}(\la)}{m}}\right)\log\left(\frac{2}{\eta}\right).
\end{equation*}

%\begin{equation*}
%\norm{\tx-\tp}_{\mathcal{L}_2(\HH')}\leq 2\left(\frac{\kappa^2}{m}+\frac{\kappa^2}{\sqrt{m}}\right)\log\left(\frac{2}{\eta}\right).
%\end{equation*}
\end{proposition}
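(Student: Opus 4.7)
The plan is to view both bounds as concentration inequalities for sums of independent Hilbert space (respectively Hilbert--Schmidt operator) valued random variables, and to derive them from a Pinelis--Bernstein type inequality in the form:\ for i.i.d.\ centered $\xi_i$ in a separable Hilbert space with $\|\xi_i\|\le L$ a.s.\ and $\mathbb{E}\|\xi_i\|^2\le\sigma^2$,
\[
\Big\|\tfrac{1}{m}\sum_{i=1}^m\xi_i\Big\|\le 2\Big(\tfrac{L}{m}+\tfrac{\sigma}{\sqrt m}\Big)\log(2/\eta)
\]
with probability at least $1-\eta$. The main work is then identifying the right random variables, verifying their centering, and extracting sharp bounds on $L$ and $\sigma$ in terms of $\kappa$, $M$, $\Sigma$ and the effective dimension $\mathcal{N}(\la)$.

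For the estimate on $\Theta_{\zz}$, I would set $\xi_i=(\tp+\la I)^{-1/2}K_{x_i}\varepsilon_i$ with $\varepsilon_i=y_i-A(\fp)(x_i)$. Assumption~\ref{Ass:fp} gives $\mathbb{E}[\varepsilon_i\mid x_i]=0$, hence $\mathbb{E}\xi_i=0$, and $\frac{1}{m}\sum\xi_i=(\tp+\la I)^{-1/2}\sx^\ast\bm{\varepsilon}$. To bound the a.s.\ magnitude, I would use Assumption~\ref{Ass:kernel} to get $\|K_x\|_{\mathcal{L}(Y,\HH')}\le\kappa$ and $\|(\tp+\la I)^{-1/2}\|\le\la^{-1/2}$, and then invoke Assumption~\ref{Ass:noise} to turn the exponential moment condition on $\|\varepsilon\|_Y$ into the Bernstein moment bounds $\mathbb{E}\|\varepsilon\|_Y^k\le\tfrac12 k!\Sigma^2 M^{k-2}$, giving an effective constant $L\sim \kappa M/\sqrt{\la}$. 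For the variance I would compute
\[
\mathbb{E}\|\xi_i\|_{\HH'}^2=\mathbb{E}\,\mathrm{tr}\bigl(K_{x_i}^\ast(\tp+\la I)^{-1}K_{x_i}\,\mathbb{E}[\varepsilon_i\varepsilon_i^\ast\mid x_i]\bigr)\le \Sigma^2\,\mathrm{tr}\bigl((\tp+\la I)^{-1}\tp\bigr)=\Sigma^2\mathcal{N}(\la),
\]
so $\sigma^2\sim\Sigma^2\mathcal{N}(\la)$, which matches the claimed right-hand side.

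For $\Psi_\xx$, I would work in the Hilbert space of Hilbert--Schmidt operators on $\HH'$ and take $\eta_i=(\tp+\la I)^{-1/2}(K_{x_i}K_{x_i}^\ast-\tp)$. Centering is immediate because $\mathbb{E}[K_xK_x^\ast]=\tp$, and $\tfrac1m\sum\eta_i$ is exactly $(\tp+\la I)^{-1/2}(\tx-\tp)$. Using $\|(\tp+\la I)^{-1/2}\|\le\la^{-1/2}$ and $\|K_xK_x^\ast\|_{\mathrm{HS}}\le\kappa^2$ one gets $L\sim \kappa^2/\sqrt{\la}$; for the variance one controls $\mathbb{E}\|\eta_i\|_{\mathrm{HS}}^2\le\mathbb{E}\,\mathrm{tr}\bigl(K_xK_x^\ast(\tp+\la I)^{-1}K_xK_x^\ast\bigr)\le\kappa^2\mathcal{N}(\la)$ by dominating one $K_xK_x^\ast$ factor by $\kappa^2$ and recognising the remaining trace.

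The main obstacle I anticipate is bookkeeping rather than conceptual: producing variance bounds that depend on $\mathcal{N}(\la)$ rather than the much cruder $\kappa^2/\la$. This requires carefully commuting operators inside the trace and exploiting $K_x^\ast(\tp+\la I)^{-1}K_x\preceq (\tp+\la I)^{-1}\tp$ in an appropriate sense. A secondary subtlety is transporting the Bernstein moment condition from Assumption~\ref{Ass:noise} into the Hilbert-valued inequality; this is handled by bounding $\mathbb{E}\|\xi_i\|^k$ with the same $M$ and $\Sigma$ and then applying the Pinelis--Sakhanenko inequality. Once these two pieces are in place, both estimates follow directly, and the final form with the $\log(2/\eta)$ factor is obtained by inverting the exponential tail.
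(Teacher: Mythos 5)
Your proposal is correct and follows essentially the same route as the source the paper relies on: the paper gives no proof of Proposition~\ref{main.bound} but cites \cite{Rastogi2020}, where exactly this argument appears — identify $(\tp+\la I)^{-1/2}\sx^*\bm{\varepsilon}$ and $(\tp+\la I)^{-1/2}(\tx-\tp)$ as empirical means of centred Hilbert-space (resp.\ Hilbert--Schmidt) valued variables, bound the Bernstein moments by $L\sim\kappa M/\sqrt{\la}$, $\sigma^2\le\Sigma^2\mathcal{N}(\la)$ (resp.\ $L\sim\kappa^2/\sqrt{\la}$, $\sigma^2\le\kappa^2\mathcal{N}(\la)$) via the trace manipulations you describe, and apply the Pinelis--Bernstein inequality. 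Your identifications, centering checks, and variance bounds are all accurate, so no gap.
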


Since~$\mathcal{N}(\la)$ is decreasing function of~$\la$ and~$\la\leq 1$. Therefore, from condition~\eqref{l.la.condition} we obtain,
\begin{equation*}
\mathcal{N}(1)\leq \mathcal{N}(\la) \leq  m\la
\end{equation*}
which implies that
\begin{equation*}
\frac{1}{m\sqrt{\la}}\leq \frac{1}{m\sqrt{\la}}\frac{\mathcal{N}(\la)}{\mathcal{N}(1)}=\frac{1}{\mathcal{N}(1)} \sqrt{\frac{\mathcal{N}(\la)}{m\la}}\sqrt{\frac{\mathcal{N}(\la)}{m}}\leq \frac{1}{\mathcal{N}(1)}\sqrt{\frac{\mathcal{N}(\la)}{m}}.
\end{equation*}
Now using this bound in Proposition~\ref{main.bound} we get with probability~$1-\eta$,
\begin{equation}\label{Theta.bound}
\Theta_{\zz}  \leq 2\paren{\frac{\kappa M}{\mathcal{N}(1)}+\Sigma}\sqrt{\frac{\mathcal{N}(\la)}{m}}\log\left(\frac{4}{\eta}\right)
\end{equation}
and
\begin{equation}\label{Psi.bound}
\Psi_{\xx}  \leq 2\paren{\frac{\kappa^2}{\mathcal{N}(1)}+\kappa}\sqrt{\frac{\mathcal{N}(\la)}{m}}\log\left(\frac{4}{\eta}\right).
\end{equation}

%\begin{proposition}\label{I1}
%Suppose Assumption~\ref{Ass:kernel} and the condition~\eqref{l.la.condition.k} hold true, then for~$m \in \NN$ and~$0<\eta<1$, the following estimates hold with the confidence~$1-\eta/2$,
%\begin{equation*}
%\norm{\tx-\tp}_{\mathcal{L}_2(\HH')}\leq \frac{\la}{4},
%\end{equation*}
%
%\begin{equation*}
%\norm{(\tx+\la I)^{-1}(\tp+\la I)}_{\mathcal{L}_2(\HH')}\leq 2
%\end{equation*}
%and
%\begin{equation*}
%\Gamma_{\xx}:=\norm{(\tx+\la I)^{-1/2}(\tp +\la I)^{1/2}}_{\mathcal{L}(\HH')}\leq \sqrt{2}.
%\end{equation*}
%\end{proposition}

\bibliography{library}
\bibliographystyle{plain}
\end{document}